\newtheorem{redrule}{Reduction Rule}
\begin{document}
\title{A Linear Kernel for Planar Vector Domination}
\author{Mahabba El Sahili
\and Faisal N. Abu-Khzam}
\authorrunning{El Sahili and Abu-Khzam}
\institute{
Department of Computer Science and Mathematics\\
Lebanese American University, 
Beirut, Lebanon.\\
\email{\{faisal.abukhzam,mahabba.elsahili\}@lau.edu.lb}
}
\maketitle 

\begin{abstract}
Given a graph $G$, an integer $k\geq 0$, and a non-negative integral function $f:V(G) \rightarrow \mathcal{N}$, the {\sc Vector Domination} problem asks whether a set $S$ of vertices, of cardinality $k$ or less, exists in $G$ so that every vertex $v \in V(G)\setminus S$ has at least $f(v)$ neighbors in $S$.
The problem generalizes several domination problems and it has also been shown to generalize {\sc Bounded-Degree Vertex Deletion} (BDVD). In this paper, the parameterized version of Vector Domination is studied when the input graph is planar. A linear problem kernel is presented. A direct consequence is a kernel bound for BDVD that is linear in the parameter $k$ only. Previously known bounds are functions of both the target degree and the input parameter.

\keywords{Dominating set \and Vector domination\and Bounded-degree vertex deletion  \and Parameterized Complexity \and Kernelization}
\end{abstract}

\section{Introduction}
The {\sc Vector Dominating Set} problem assumes a graph $G$ is given along with an integer $k\geq 0$ and a demand function $f$ from the set $V$ of vertices of $G$ to the set $\{0,\ldots |V|-1\}$, and asks whether a set $D$ of at most $k$ vertices of $G$ exists such that each vertex $v\in V(G)\setminus D$ has at least $f(v)$ neighbors in $D$. The problem can be viewed as a generalization of many domination problems that are extensively studied in the literature, including the classic {\sc Dominating Set} problem, the $r$-{\sc Dominating Set} problem \cite{LAN20131513,chellali2010p,chen1998upper,favaron2008k,hansberg2009upper} and 
the $\alpha$-{\sc Dominating set} problem \cite{DUNBAR200011,DBLP:journals/dmgt/DahmeRV04,DAHME20083187,article333} which is also known to be a generalization of {\sc Positive Influence Dominating Set} (PIDS). The latter problem is known for applications in social networks \cite{7411175,PIDS,10.1007/s10898-009-9511-2,AL18}. PIDS is also called a monopoly in the literature \cite{monopoly}. In addition to domination problems, {\sc Vector Domination} is also a generalization of the {\sc Bounded Degree Vertex Deletion} problem (BDVD)
\cite{ganian2021structural,BETZLER201253,FELLOWS20111141},
along with its dual, the $s$-{\sc Plex Detection} problem. The latter also finds application in social network analysis \cite{article,article2}. 

From a computational complexity standpoint, {\sc Vector Dominating Set} is $NP$-hard being a generalization of the dominating set problem. 
It is well known by now that the study of a problem's parameterized complexity \cite{dblp1894779,DowneyF13} provides an alternative view of its complexity and can shed light on classes/types of input where the problem can be tractable. 
Under the parameterized complexity lens, a problem is fixed parameter tractable (FPT) if there exists an algorithm that solves it in $O(f(k)n^c)$ time where $n$ is the size of the input, $k$ is an input parameter, $f$ is a function of $k$ and $c$ is a constant. Alternatively, a problem is FPT if it admits a kernel i.e given an instance $(G,k)$ of the problem, an equivalent instance $(G',k')$ can be constructed in polynomial time where $|G'| \leq g(k)$ and $k' \leq k$. $G'$ is referred to as a problem kernel. Of particular interest is the case where $g$ is a (low degree) polynomial function of $k$.
The {\sc Dominating Set} problem is $W[2]$-hard on general graphs \cite{flum_2002}, thus there is little or no hope in achieving fixed parameter tractability for it or for the more general {\sc Vector Dominating} set problem in general graphs. 

On the other hand, {\sc Vector Dominating Set} is known to be fixed-parameter tractable when the input is restricted to planar graphs \cite{ISHII201680}.
By exploiting structural properties of planar graphs, especially degree structure, Alber et al. \cite{alber2002polynomial} provided a linear kernel for the planar {\sc Dominating Set} problem. This early work was followed by a series of papers on linear kernels for different domination  problems  \cite{alber2002polynomial,GARNERO2017536,garnero2018linear,LOKSHTANOV20112536}. Surprisingly, no linear kernel is known yet for the (more general) {\sc Vector Dominating Set} problem in planar graphs.  Addressing this problem is the main objective of this paper. A linear kernel is presented  based on several structural properties of planar graphs, including the methods used in \cite{alber2002polynomial}. The bound achieved is $101k$, and it results in  linear kernel bounds for several special problems including the {\sc Planar Bounded-Degree Vertex Deletion} (PBDVD) for which the obtained kernel is linear in the number of deleted vertices ($k$) only. Previously known kernel bounds (in general) are functions of $k$ and the target degree bound \cite{?}.



\section{Preliminaries}

Common graph-theoretic terminology is used in this paper, with a main focus on planar graphs and their properties.
Although we assume the input to the considered problem is a planar graph that is given with a fixed embedding (i.e., plane graph), we note that deciding whether a given graph is planar, and constructing a corresponding planar embedding, can be done in linear time \cite{10.1145/321850.321852}. It is well known that the number of edges in a planar graph $G$ with $|G|\geq 3$ is at most $3|G|-6$, where $|G|$ is the order of $G$ (i.e., number of vertices). Moreover, if $G$ is bipartite the number of edges is at most $2|G|-4$. The following problem is considered:

\vspace{10pt}
\noindent
{\sc Planar Vector Dominating Set} (PVDS) 

\noindent
{\bf Given:} A planar graph $G=(V,E)$ of order $n$, an $n$-dimensional non-negative {\em demand} vector $d$, 
and an integer $k$.

\noindent
{\bf Question:} Is there a set $S\subset V$ of cardinality $k$ such that every vertex $v \in G-S$ has at least d(v) neighbors in $S$?

\vspace{10pt}

Let $(G,d,k)$ be an instance of PVDS. A vertex $v\in V(G)$ is said to be a $j$-vertex if $d(v) = j$ where $j\in \mathbb{N}$. A $j$-vertex will be considered {\em demanding}, or of {\em high-demand} if $j > 0$, otherwise it will be a {\em low-demand} vertex.
We denote by $G-v$ the subgraph of $G$ induced by $V(G)\setminus \{v\}$. The set of neighbors of $v$ is denoted by $N(v)$ while $N_l(v)$ and $N_h(v)$ are sets of low-demand and high-demand neighbors of $v$, respectively. Furthermore, we denote by $N[v]$ the set $N(v)\cup \{v\}$. The same applies to $N_l[v]$ and $N_h[v]$.

For a set of vertices $A\subset V(G)$ we define  $N(A)=\cup_{v\in A} N(v)$, and we denote by $N_l(A)$ and $N_h(A)$ the sets of low-degree and high-degree elements of $N(A)$, respectively. $N[A], N_l[A]$ and $N_h[A]$ are defined analogously.

A solution is said to {\em observe} an edge $e=uv$ where $u,v\in V(G)$ if one of the endpoints of $e$, namely $u$, is deleted to decrease d(v). We say that a set $A$ dominates another set $B$ if for all $v \in B$, $|N(v) \cap A| \geq d(v)$.

In some cases, we know certain vertices are not to be selected into a PVDS solution. This gives rise to a yet another more general problem. In particular, we shall consider the following {\em annotated} PVDS problem:

\vspace{10pt}

\noindent
{\sc Annotated Planar Vector Dominating Set} (APVDS)

\noindent
{\bf Given:} A planar graph $G=(V,E)$ of order $n$, an $n$-dimensional non-negative {\em demand} vector $d$, and an integer $k$, and a set $P \subset V$.

\noindent
{\bf Question:} Is there a set $S\subset V-P$ of cardinality at most $k$ such that every vertex $v \in G-S$ has at least d(v) neighbors in $S$?

\vspace{10pt}

{\em Redundant} solution elements play a role in our kernelization method. Let $v$ be a vertex such that: if there is a solution $S$ of APVDS where $v\in S$, then there is anther solution $S'$ where $v \notin S'$. We will further annotate $G$ by assigning a special blue color to such vertices, which are assumed to be redundant (or replaceable) in this case.
Note that if $v$ is redundant, then the two APVDS instances $(G,k,d,P)$ 
and $(G,k,d,P\cup \{v\})$ are equivalent.

\section{A Linear Kernel}

The following reduction rules for the PVDS are assumed to be applied successively and exhaustively. The first five rules are applicable to the  general {\sc Vector Dominating Set} problem.


\begin{redrule}
\label{1}
If $u$ and $v$ are adjacent 0-vertices then delete the edge $uv$.
\end{redrule}

\begin{redrule}
\label{2}
Delete all isolated 0-vertices.
\end{redrule}

\begin{redrule}
\label{3}
If for some vertex $v$, $d(v)>k$ or $d(v)>|N(v)|$, delete $v$, then decrease $d(u)$ by one for all $u \in N(v)$ and decrease $k$ by one.
\end{redrule}

\begin{redrule}
\label{4}
Let $v$ be a 0-vertex. If there exists a vertex $a \in V(G)$ such that $N(v) \subset N[a]$, then for any 1-vertex $b$ in $N(v)$, delete the edge $vb$. Delete the edge $va$ if it exists (see Figure \ref{fig1}).
\end{redrule}

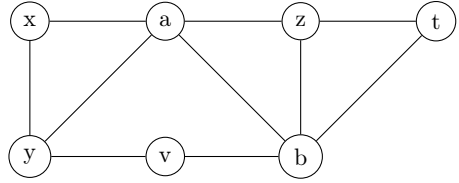
\begin{figure}[bth!]
\begin{center}
	\begin{tikzpicture}[transform shape,scale=0.9]
			\tikzset{every node/.style={circle,minimum size=0.4cm}}
			\node[draw] (A) at (-2,1) {x};
			\node[draw] (C) at (-2,-1) {y};
			\node[draw] (B) at (0,1) {a};
			\node[draw] (D) at (0,-1) {v};
			\node[draw] (E) at (2,1) {z};
            \node[draw] (G) at (4,1) {t};
			\node[draw] (F) at (2,-1) {b};
			
			\path (A) edge[-] (C);
			\path (B) edge[-] (C);
			\path (B) edge[-] (A);
			\path (B) edge[-] (E);
			\path (B) edge[-] (F);
			\path (D) edge[-] (C);
			\path (D) edge[-] (F);
			\path (F) edge[-] (E);
            \path (F) edge[-] (G);
            \path (G) edge[-] (E);
        \end{tikzpicture}
\end{center}
\caption{Reduction Rule 4: delete $vb$.} 
\label{fig1}
\end{figure}

\noindent
{\bf Soundness.} Any potential solution must contain a neighbor of vertex $b$. If this neighbor is $v$, then it can be very well replaced by $a$. Therefore, if a solution exists, we know for sure there must be a solution that does not observe the edges $bv$ and $av$.

\begin{redrule}
Let $v$ be a 1-vertex. If there exists a vertex $a \in N(v)$ such that every vertex $u\in N[v]-a$ satisfies: 
$d(u) \leq 1$, and $N_h[u]\subset N[a]$,
then delete $a$, decrease the demand of each neighbor of $a$ by one, and decrease k by one. 
\end{redrule}

\noindent
{\bf Soundness.} 
Any solution $S$ must contain a vertex $b$ of $N[v]$. If $a\notin S$, replace $b$ by $a$ since $N_{h}[b]\subset N[a]$.

\begin{definition}
For $s_{1}, s_{2} \in V(G)$, $s_{1} \neq s_{2}$:

\begin{itemize}

\item An $s_{1} s_{2}-$path of length two is called a type 1 path.

\item An $s_{1} s_{2}-$path of length 4 $s_{1} v c v' s_{2}$ is called a type 2 path if $d(c)=0$, $d(v)=1$, $v \notin N(s_{2})$ and $v' \notin N(s_{1})$. 

\item An $s_{1} s_{2}-$path of length 3 $s_{1} v  v' s_{2}$ is called a type 3 path if $d(v)\leq 1$. A 1-vertex outside this path adjacent to both $s_{1}$ and $v$ is called a "leech" for this path.
\end{itemize}
\end{definition}

\begin{definition}
    
Suppose the graph contains a solution set $S$. A real path $p$ is a path with endpoints in $S$ satisfying one of the following conditions:

\begin{itemize}

\item $p$ is a type 1 path;

\item $p$ is a type 2 path such that $p=s_{1} v c v' s_{2}$ where $v$ and $v'$ are not both used by a type 1 real path;

\item  $p$ is a type 3 path such that $p=s_{1} v v' s_{2}$ where $v$ and $v'$ are not both used by a type 1 real path.
\end{itemize}

\end{definition}

From this point on, we denote by $A$ the set of 0-vertices such that $N(A)\subset \{u\in V(G): |N(u)\cap S|\geq 2\}$.
Set $L$ to be the set of all leeches for real paths.


\begin{lemma}
\label{edge-num}
If V(G) contains a solution set $S$, then after applying the reduction rules above any vertex $v \in G-(S\cup A \cup L)$  belongs to some real path.
\end{lemma}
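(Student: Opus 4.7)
The plan is to fix an arbitrary $v\in V(G)\setminus(S\cup A\cup L)$ and to locate a real path through $v$ by a case analysis on $d(v)$, invoking in each case the exhaustive application of one of the reduction rules.

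If $d(v)\geq 2$, the domination condition gives $|N(v)\cap S|\geq d(v)\geq 2$, and any two distinct $s_1,s_2\in N(v)\cap S$ yield a type 1 real path $s_1vs_2$ through $v$. If $d(v)=1$, we may assume $v$ has a unique $S$-neighbor $s_1$ (else we again obtain a type 1 path). Since Reduction Rule 5 is not applicable to $v$ with $a:=s_1\in N(v)$, there exists $u\in N[v]\setminus\{s_1\}$ with $d(u)\geq 2$ or with some $w\in N_h[u]\setminus N[s_1]$. Tracking $w$ (or $u$ itself) to an $S$-vertex yields an $s_2\neq s_1$, and this delivers either a type 3 path $s_1vxs_2$ (with $x$ a demanding neighbor of $v$ or a demanding second neighbor) or a length-4 extension $s_1vuws_2$, which is type 2 exactly when the intermediate $u$ is a 0-vertex.

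If $d(v)=0$, Reduction Rule 1 forces every neighbor of $v$ to have positive demand, and $v\notin A$ supplies $u\in N(v)$ with $|N(u)\cap S|\leq 1$. The domination inequality then forces either $u\in S$ (set $s_1:=u$, so $vs_1\in E$), or $u\notin S$ with $d(u)=1$ and a unique $S$-neighbor $s_1$. In both situations, $u$ is a 1-vertex in $N(v)$ or the edge $vs_1$ is present, so the inapplicability of Reduction Rule 4 with $a:=s_1$ yields $u'\in N(v)\setminus N[s_1]$; by Rule 1, $d(u')\geq 1$, hence $u'$ has an $S$-neighbor $s_2\neq s_1$. This produces a type 2 real path $s_1uvu's_2$ in the former situation and a type 3 real path $s_1vu's_2$ in the latter. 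In all cases the ``not both on a type 1 real path'' clause is vacuous for $v$, since a vertex with at most one $S$-neighbor is never the center of a type 1 path.

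The main obstacle is the degenerate sub-case of $d(v)=1$ in which the length-4 extension $s_1vuws_2$ has $d(u)=1$ rather than $0$, so it fails to be type 2; moreover $u$ must have $s_1$ as its unique $S$-neighbor (otherwise a second $S$-neighbor of $u$ would already give a type 3 path through $v$). In that situation $s_1uws_2$ is itself a type 3 real path and $v$, being a 1-vertex adjacent to both $s_1$ and $u$ and lying outside this path, is a leech for it, contradicting $v\notin L$. This is the one place where excluding $L$ is indispensable, mirroring how excluding $A$ is what powers the case $d(v)=0$.
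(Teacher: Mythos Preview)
Your proof is essentially correct and follows the same approach as the paper: case analysis on $d(v)$, using Reduction Rule~5 to drive the $d(v)=1$ case, Reduction Rule~4 for the $d(v)=0$ case, and the leech contradiction to close the residual sub-case. The organization differs slightly---the paper first disposes of the ``0-vertex neighbor'' sub-case of $d(v)=1$ via Rule~4 applied to that neighbor, and only then invokes Rule~5, whereas you invoke Rule~5 immediately and let the witness $u$ absorb all sub-cases---but the content is the same.

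Two small write-up issues: (i) In the $d(v)=0$ paragraph you have swapped ``former'' and ``latter'': when $u\in S$ (so $s_1=u$) the path is the type~3 path $s_1vu's_2$, and when $u\notin S$ with $d(u)=1$ the path is the type~2 path $s_1uvu's_2$; as written, $s_1uvu's_2$ with $u=s_1$ is not a length-4 path. (ii) Your final sentence justifying the ``not both on a type~1 real path'' clause appeals to $v$ having at most one $S$-neighbor, but in the type~2 path $s_1uvu's_2$ the clause concerns the second and fourth vertices $u,u'$, not the middle vertex $v$; the clause still holds because $u$ has $s_1$ as its unique $S$-neighbor, but you should say so. Also, in the $d(v)=0$ case you should first note that if $|N(v)\cap S|\geq 2$ then $v$ already lies on a type~1 path, so one may assume $v$ has at most one $S$-neighbor before proceeding.
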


\begin{proof}
Suppose V(G) contains a deletion set $S$. Let $v \in G \setminus (S\cup A \cup L)$. If $d(v) \geq 2$, $v$ must be adjacent to two vertices in $S$ so $v$ is on a real path of type 1.
If $d(v)=1$, it must be adjacent to some $s_{1} \in S$. We may assume that $N(v)\cap S=\{s_{1}\}$ otherwise it lies on a type 1 real path. If it is connected to a 0-vertex $c$, $c \notin S$, due to reduction rule 4 we have $N(c) \not\subset N[s_{1}]$. Let $v'\in N(c)\setminus N[s_{1}]$, 
Due to reduction rule 1 $v'\in N[s_{2}]$ for some vertex $s_{2} \in S$, $s_{2} \neq s_{1}$. We may easily verify that $v$ is  on some real path. Now let $u\in N(v)\setminus S$. If $|N(u)\cap S| \geq 2$, $v$ lies on a type 3 real path, so we may assume that $d(u)=1$ for all $ u\in N(v)-s_{1}$. Due to reduction rule 5, there exists a vertex $ u'\in N(v)-s_{1}$ where $u'\in N(s_{2})$ for some $s_{2}\in S, s_{2} \neq s_{1}$. Thus $v$ is a leech for the real path $s_{1} u u' s_{2}$, a contradiction.

\noindent
Now suppose that $d(v)=0$. If $|N(v)\cap S| \geq 2$, $v$ lies on a type 1 real path, and as above  if $N(v)\cap S =\{s_{1}\}$ then there exists a real path $s_{1} v u s_{2}$. So we may suppose that $N(v)\cap S =\emptyset$. Since $v \notin A$, there exists $u\in N(v)$ where $N(u)\cap S=\{s_{1}\}$, As above using reduction rule 4 we may verify that there exists a real path $s_{1} u v u' s_{2}$.
\end{proof}

Following the approach of Alber et al (2004), we define the concept of a region.
\begin{definition}
Let $G(V,E)$ be a plane graph with solution set $S$. A region $R(v,w)$ between two vertices $v,w\in S$ is a closed subset of the plane with the following properties:

\begin{itemize}
    
\item The boundary of $R(v,w)$ is formed by two $vw$-real paths $o_{1}$ and $o_{2}$;

\item All vertices strictly inside the region $R(v,w)$ can be dominated by $\{v,w\}$;

\item There is a line in the plane separating the region $R(v,w)$ into two closed areas $A_{1}$ and $A_{2}$ where $o_{1}$ lies on the boundary of $A_{1}$ and $o_{2}$ lies on the boundary of $A_{2}$.

\end{itemize}

\end{definition}

From this point on, we refer to the above paths $o_{1}$ and $o_{2}$ as {\em outer paths} between $v$ and $w$. For a region $R = R(v, w)$, let $V(R)$ denote the vertices belonging to $R$, i.e.,
$V(R) = \{u \in V : u$ sits inside or on the boundary of $R\}$. In what follows, the boundary of a region $R$ will be denoted by $\partial R$.


\begin{definition} \cite{alber2002polynomial}
Let $G = (V, E)$ be a plane graph and $S \subset V$. An S-region decomposition of $G$ is a set $\mathscr{R}$ of regions between pairs of vertices in $S$ such
that:
\begin{enumerate}
    \item  for $R(v, w) \in$ $\mathscr{R}$ no vertex from $S$ (except for v, w) lies in V (R(v, w))
    \item for two regions $R_{1}, R_{2} \in $$\mathscr{R}$, $(R_{1} \cap R_{2}) \subset (\partial R_{1} \cup \partial R_{2})$.
\end{enumerate}
 An S-region decomposition $\mathscr{R}$ is called maximal if it is maximal for inclusion.\\
One defines the graph $G_{\mathscr{R}} = (V_{\mathscr{R}}, E_{\mathscr{R}})$ with possible multiple edges of an S-region decomposition $\mathscr{R}$ of $G$ where $V_{\mathscr{R}}= S$  and
$E_{\mathscr{R}}= \{\{v, w\}:$  there is a region $R(v, w) \in \mathscr{R}$  between $v, w \in S\}$.
\end{definition}

We shall show that after applying the reduction rules above to an instance $(G,k)$ of PVDS, for which a solution set $S$ exists, there exists a maximal $S$-region decomposition $\mathscr{R}$ such that:
\begin{enumerate}
    \item The number of regions of $\mathscr{R}$ is $O(k)$.
    \item Each region $R \in $$\mathscr{R}$ contains $O(1)$ vertices.
     \item The number of vertices not in $V(\mathscr{R})$ is $O(k)$.
\end{enumerate}
 These facts together mean that the size of the graph after applying the reduction rules is $O(k)$, and a linear kernel is obtained.

\begin{definition} \cite{alber2002polynomial}
A planar graph $G = (V, E)$ with multiple edges is thin if there exists a planar embedding such that if there are two edges $e_{1}$, $e_{2}$ between a pair of distinct vertices $v, w \in V$, then there must be two further vertices $u_{1}, u_{2} \in V$ which sit inside
the two disjoint areas of the plane that are enclosed by $e_{1}$, $e_{2}$.
\end{definition}
\begin{lemma}\cite{alber2002polynomial}
For a thin planar graph G = (V, E) we have $|E|\leq 3|V |$-$6$.
\end{lemma}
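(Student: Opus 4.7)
The plan is to derive the bound from Euler's formula by the familiar double-counting argument, where the key step is to use the thinness hypothesis precisely to rule out faces of length two.

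First, I would fix the thin planar embedding of $G$ guaranteed by the hypothesis and, without loss of generality, reduce to the connected case (handling the disconnected case either by invoking Euler's formula with the number of connected components, or by adding temporary edges between components that preserve both planarity and thinness). The definition of thinness explicitly concerns two edges between a pair of \emph{distinct} vertices $v \neq w$, so I will also assume $G$ has no loops; if loops were allowed in the setting, they could be stripped off first since they contribute nothing to the bound. Under these assumptions Euler's formula gives $|V| - |E| + |F| = 2$.

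The heart of the argument is the claim that every face of the embedding has boundary length at least three. A face of length one would require a loop, which we have excluded. A face of length two would have to be bounded by two parallel edges $e_1, e_2$ between some pair of distinct vertices $v, w \in V$. Viewing $e_1 \cup e_2$ as a simple closed curve in the plane, it encloses two disjoint open regions: one of these regions is, by definition, the interior of this alleged bigonal face, and contains no vertex of $V$. This directly contradicts the thinness condition, which demands that \emph{both} regions enclosed by $e_1, e_2$ contain a further vertex of $V$. Hence no bigonal face can exist, and every face has length at least three.

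With that in hand, the standard double-counting $2|E| = \sum_{f \in F} |\partial f| \geq 3|F|$ yields $|F| \leq \tfrac{2}{3}|E|$; plugging into $|F| = 2 - |V| + |E|$ gives $|E| \leq 3|V|-6$, as required. The main obstacle in the proof is really only the verification that thinness rules out length-two faces, and this is essentially immediate from the definition once one interprets a bigonal face as one of the two regions enclosed by the pair of parallel edges bounding it; the rest of the argument is a routine transposition of the simple planar graph bound into the multigraph setting.
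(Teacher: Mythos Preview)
The paper does not supply its own proof of this lemma: it is quoted from Alber, Fellows, and Niedermeier \cite{alber2002polynomial} and used as a black box. Your argument is exactly the standard one from that source --- thinness forbids bigonal faces, so every face has boundary length at least three, and Euler's formula plus the edge--face double count yields $|E|\le 3|V|-6$. It is correct; the only small caveat is that, as with the simple-graph version, the inequality presupposes $|V|\ge 3$, which you may wish to state explicitly.
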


The following proposition can be easily deduced from \cite{alber2002polynomial}.
\begin{proposition}
For a plane graph G with vector dominating set S, there exists a maximal S-region decomposition $\mathscr{R}$ such that $G_{\mathscr{R}}$ is thin.
\end{proposition}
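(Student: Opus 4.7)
The plan is to adapt the surgery argument of Alber et al.~\cite{alber2002polynomial} almost verbatim; the only genuinely new content is that the region condition must be re-verified against the demand function $d$ (i.e., $|N(u)\cap\{v,w\}|\ge d(u)$) rather than the classical dominating-set condition. I would start from an arbitrary maximal S-region decomposition $\mathscr{R}$---which exists by greedily extending the empty decomposition---and eliminate all violations of thinness by a sequence of local surgeries.

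Suppose, for contradiction, that $G_{\mathscr{R}}$ is not thin. Then there exist $v,w\in S$ and two regions $R_1,R_2\in\mathscr{R}$ between $v$ and $w$ whose corresponding parallel edges in $G_{\mathscr{R}}$ enclose a topological area $\Omega$ containing no further vertex of $S$. Label the outer paths so that $o_1\subset\partial R_1$ and $o_2\subset\partial R_2$ border $\Omega$, while $o_1'$ and $o_2'$ are the opposite outer paths. The local surgery replaces $R_1$ and $R_2$ by a single region $\tilde R=R(v,w)$ whose boundary consists of $o_1'$ and $o_2'$, absorbing all of $R_1\cup R_2\cup\Omega$ into $\tilde R$.

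The crucial step is to verify that every vertex strictly inside $\tilde R$ is dominated by $\{v,w\}$ in the vector sense. For vertices previously strictly inside $R_1$ or $R_2$ this is immediate from the original region condition. The delicate case is a vertex $u$ strictly inside the absorbed area $\Omega$: assuming $u\notin A\cup L$, Lemma~\ref{edge-num} places $u$ on a real path whose endpoints lie in $S$, and since $\Omega$ contains no $S$-vertex while its boundary consists of subpaths of $o_1,o_2$ whose internal vertices are not in $S$, a Jordan-curve argument forces those endpoints to be $v$ and $w$ themselves. Hence $N(u)\cap S\subseteq\{v,w\}$, and the demand $d(u)$ is met. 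This is the main technical obstacle; it relies crucially on the reduction rules having been applied exhaustively, so that the real-path structure tightly constrains the $S$-neighborhood of $u$.

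Each surgery strictly decreases the number of parallel-edge pairs in $G_{\mathscr{R}}$ while preserving maximality: any region that was addable before the merge remains addable afterwards, possibly after inserting further regions in the newly merged space. Iterating therefore terminates, and the final decomposition is a maximal S-region decomposition whose associated multigraph is thin, as required.
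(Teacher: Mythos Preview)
Your surgery approach is exactly what the paper has in mind---it simply defers to Alber et al.\ for this proposition---and the termination and maximality bookkeeping are fine. The problem is in your verification that every vertex strictly inside the merged region $\tilde R$ is dominated by $\{v,w\}$.

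The claim that a Jordan-curve argument forces the endpoints of the real path through $u$ to be $v$ and $w$ is not correct in general. For example, if $u$ is the central $0$-vertex $c$ on a type~2 real path $s_1\,v_1\,c\,v_2\,s_2$, then $v_1$ may sit on the absorbed boundary $o_1$ while $s_1$ lies outside $\overline{\Omega}$ altogether; the path simply exits $\Omega$ through a non-$S$ boundary vertex. So the step ``hence $N(u)\cap S\subseteq\{v,w\}$'' does not follow from the real-path structure you set up. You also never treat the vertices of $A\cup L$ inside $\Omega$, nor the internal vertices of $o_1$ and $o_2$ themselves, which were previously on $\partial R_1\cup\partial R_2$ (so carried no domination guarantee) and are now strictly interior to $\tilde R$.

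All of this is fixed by dropping the detour through Lemma~\ref{edge-num} and arguing directly. For \emph{every} $u$ strictly inside $\tilde R$---whether in $\Omega$, in $A$, in $L$, or on the absorbed paths $o_1,o_2$---planarity alone gives $N(u)\cap S\subseteq\{v,w\}$, since the closure of $\tilde R$ contains no $S$-vertex other than $v$ and $w$. Because $S$ is a vector dominating set and $u\notin S$, we then have $|N(u)\cap\{v,w\}|=|N(u)\cap S|\ge d(u)$. This single observation is what makes the Alber et al.\ proof carry over verbatim to vector domination, and it needs no reduction rules; your assertion that the argument ``relies crucially on the reduction rules having been applied exhaustively'' is therefore mistaken, and indeed the proposition is stated without any such hypothesis.
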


Consequently, the number of regions $R(v,w)$ will be bounded by $3k-6$.

\begin{lemma}
Suppose $G$ admits a solution $S$ with a maximal S-region decomposition $\mathscr{R}$, then any vertex $v \in G-(S\cup A \cup L)$ is in $V(\mathscr{R})$.
\end{lemma}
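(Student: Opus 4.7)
The plan is to combine Lemma~\ref{edge-num} with the maximality of $\mathscr{R}$, arguing by contradiction. Fix any $v \in G - (S \cup A \cup L)$. By Lemma~\ref{edge-num}, $v$ lies on some real path $p$ between vertices $s_1, s_2 \in S$. If $v$ sits strictly inside some region $R \in \mathscr{R}$ or on its boundary, then $v \in V(R) \subseteq V(\mathscr{R})$ and we are done; so assume instead that $v$ lies in the exterior of every region of $\mathscr{R}$.

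Under this assumption, I would construct a new region $R^*$ containing $v$ whose addition to $\mathscr{R}$ preserves the two defining conditions of an $S$-region decomposition, contradicting maximality. The real path $p$ supplies one of the two outer paths of $R^*$. For the second outer path, I would consider, in the fixed planar embedding, the cyclic order around the pair $\{s_1, s_2\}$ of all $s_1 s_2$-real paths together with segments of boundaries of existing regions of $\mathscr{R}$ incident to $s_1$ or $s_2$, and select the path $p'$ immediately adjacent to $p$ on the side containing $v$. The path $p'$ together with $p$ encloses a closed area of the plane, and this area, equipped with any suitable separating chord, yields the candidate region $R^*$.

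The main obstacle is verifying the defining property of a region that every vertex strictly inside $R^*$ be dominated by $\{s_1, s_2\}$. This requires a case analysis on the type of each interior vertex, invoking the classifications of type~1, 2, and 3 real paths, the definitions of $A$ and $L$, and Reduction Rules~1, 4, and 5. The assumption $v \notin A \cup L$ is precisely what rules out the two residual obstructions: vertices that could only be dominated by a third solution element, and leech vertices that cling to a type~3 path without being enclosed by it. Condition~1 of the $S$-region decomposition (no vertex of $S$ other than $s_1, s_2$ lies in $V(R^*)$) then follows from choosing $p'$ immediately adjacent to $p$ in the cyclic order, and condition~2 (interior disjointness from the existing regions) follows from our assumption that $v$ lies exterior to every region already in $\mathscr{R}$. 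The resulting $R^*$ can be added to $\mathscr{R}$, contradicting maximality and proving the lemma.
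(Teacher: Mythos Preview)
Your overall strategy---use Lemma~\ref{edge-num} to place $v$ on a real path $p$, then build a new region around $p$ to contradict maximality---matches the paper's, but the execution skips the one step that does all the work. You assume that because $v$ itself lies outside every region of $\mathscr{R}$, the region $R^*$ you build from $p$ and a neighbouring path $p'$ will automatically satisfy condition~2 (interior disjointness from the existing regions). That implication is false: nothing prevents the real path $p$ from entering the interior of some $R(s_1,s_2)\in\mathscr{R}$ even though the particular vertex $v$ on $p$ sits outside. When that happens, your $R^*$ overlaps $R(s_1,s_2)$ in more than their boundaries, and adding $R^*$ to $\mathscr{R}$ does \emph{not} yield an $S$-region decomposition, so no contradiction with maximality is obtained.

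This is exactly the situation the paper's proof confronts head-on. It assumes $p$ crosses some region $R$, lets $o_1$ be the outer path of $R$ that $p$ meets, and then does a case analysis on the types of $p$ and $o_1$ (type~1, 2, or 3). In each case it either (i) splices pieces of $p$ and $o_1$ together to produce a \emph{new} real path bounding a region that contains $v$ and does not cross $\mathscr{R}$, giving the maximality contradiction, or (ii) shows that $v$ is forced to be a leech for $o_1$, contradicting $v\notin L$. Your sketch gestures at ``a case analysis on the type of each interior vertex,'' but the real case analysis is on how the path $p$ threads through the boundary of an existing region, and that is where the specific structure of type~2 and type~3 paths (the positions of the $0$- and $1$-vertices, the non-adjacency conditions) gets used. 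Without that analysis, the argument does not close. A secondary issue: your description of $p'$ as ``immediately adjacent to $p$ on the side containing $v$'' is ill-posed, since $v$ lies \emph{on} $p$, not to one side of it.
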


\begin{proof}
Take $v\notin L \cup A \cup S$, then by Lemma 1 $v$ must belong to some real path $p$. We will say that an edge crosses a region $R$ if the edge lies (except possibly for its endpoints) strictly inside $R$. Similarly, we say that a path crosses a region $R$ if at least one edge of the path crosses $R$. Suppose that $v\notin V(\mathscr{R})$, then $p$ must cross some region $R=R(s_{1},s_{2})$ where $s_{1},s_{2} \in S$. Let $o_{1}$ and $o_{2}$ be the two outer paths of $R$, then $p$ must intersect $o_{1}$ or $o_{2}$. Without loss of generality, we will assume $p$ intersects $o_{1}$. If $p$ is a type 1 path the Lemma follows trivially. Thus we will consider the following cases:
\begin{itemize}
\item Case 1: $p$ is a type 2 path $s v c v' s'$.\\
In this case either $c\in o_{1}$ or $v' \in o_{1}$.
If $c\in o_{1}$ and $o_{1}$ is a type 1 path a real path disjoint from $\mathscr{R}$ arises, a contradiction.

If $o_{1}$ is a type 2 path, $o_{1}=s_{1} u c u' s_{2}$, then one of the regions $s v c u s_{1}$ or $s v c u' s_{2}$ contains $v$ contradicting the maximality of $\mathscr{R}$. Indeed we may suppose that $s \neq s_{2}$, if $u'$ and $s$ are not adjacent, then $s v c u' s_{2}$ is a real path disjoint from $\mathscr{R}$. Otherwise $s \neq s_{1}$ then $s v c u s_{1}$ has the desired property as $u$ and $s$ are not adjacent due to the fact that $o_{1}$ is real. Now if $o_{1}$ is a type 3 path $s_{1} c u s_{2}$ then if $s \ne s_{1}$ the path $s v c s_{1}$ is a region that contains $v$, else $s=s_{1}$ then $v$ is a leech for $o_{1}$ so $v\in L$ contradicting our assumption.
Now if $c\notin o_{1} $ and $ v'\in o_{1}$. In this case the edge $v's'$ crosses $R$ so $s' \in V(R)$ thus $s'$ is an end of $o_{1}$. Then $o_{1}$ is not of type 1 or 2, since in these cases $v's' \in o_{1}$. The remaining case where $o_{1}$ is a type 3 path may be treated exactly as above.

\item Case 2: p is a type 2 path $s u v u' s'$ or a type 3 path $s v u' s'$.

In this case suppose (namely) that the edge $u's'$ crosses $R$, then, as in the previous argument, a region containing $u$ and $v$ can be found that does not cross any other region in $\mathscr{R}$, contradicting the maximality of $\mathscr{R}$, or the same reasoning as above can be repeated.
\end{itemize}
\end{proof}

\begin{definition}
let $a_{1}, a_{2} \in V(G)$, Call a closed subset of the plane $C(a_{1},a_{2})$ a candidate region between $a_{1}$ and $a_{2}$ if the boundary of $C(a_{1},a_{2})$ is formed of two $a_{1}a_{2}-$paths of type 1,2 or 3, every vertex in the interior of $C(a_{1},a_{2})$ is dominated by $\{a_{1},a_{2}\}$, and $C(a_{1},a_{2})$ is maximal for these properties (for inclusion).
\end{definition}

Let $a_{1}, a_{2} \in V(G)$ and take $C(a_{1},a_{2})$ to be a candidate region between $a_{1}$ and $a_{2}$ with boundary $O(a_{1},a_{2})=O_{1} \cup O_{2}$. Consider the following sets:
\begin{itemize}
    \item $Y(a_{1},a_{2})=\{v: v\in O(a_{1},a_{2})$ and $d(v) \geq 2\}$
    \item $B(a_{1},a_{2})$ is the set of vertices in the interior of $C(a_{1},a_{2})$ that are adjacent to some vertex in $O(a_{1},a_{2})$.
    \item $I(a_{1},a_{2})$ is the set of vertices in the interior of $C(a_{1},a_{2})$ that are not in $B(a_{1},a_{2})$.
    \item $I'(a_{1},a_{2})=$ $\{v$: $\{v\}$ dominates $I(a_{1},a_{2}), v\notin P\}$
    \item $O'(a_{1},a_{2})=\{v: |N(v) \cap Y(a_{1},a_{2})| \geq 2\}$
\end{itemize}

In the following rules let $a_{1}, a_{2} \in V(G)$, $C(a_{1},a_{2})$  be a candidate region between $a_{1}$ and $a_{2}$. Let $C, O, B, I,I'$ and $O'$ denote $C(a_{1},a_{2})$, $O(a_{1},a_{2})$, $B(a_{1},a_{2}),$ $I(a_{1},a_{2}),$ $I'(a_{1},a_{2})$  and $O'(a_{1},a_{2})$ respectively. Reduction rules $6,7$ and $8$ apply only on planar graphs. 

Note that if $I \neq \emptyset$ and a vertex $w \notin I'$ is in the solution set $S$, then there must be another vertex $w'\in I\cup B \cup \{a_{1},a_{2}\}$ in $S$. The following reduction rule follows:

\begin{redrule}
Suppose $I\neq \emptyset$ and let $u\in C$. If $u\notin I'$ and $N(u)\cap Y=\emptyset$, then color $u$ blue.
\end{redrule}

\noindent
{\bf Soundness.}
Suppose $u\in S$, since $u\notin I'$, $\exists u'\in I\cup B \cup \{a_{1},a_{2}\}$ in $S$.
We have $Y\cap N(u) =\emptyset $, then if $u'=a_{1}$ (namely) replace $u$ by $a_{2}$ (all $N[u]$ is dominated by $\{a_{1},a_{2}\}$). So suppose $u'\in B\cup I$, then replace $u$ and $u'$ by $a_{1}$ and $a_{2}$, since $B\cup I$ would be dominated and $|N(Y)\cap \{u,u'\}| \leq 1$ while $ \forall y\in Y, |N(y)\cap \{a1,a2\}| \geq 1$.

\begin{redrule}
Suppose  $I\neq \emptyset$ and $O'\neq \emptyset$. If a vertex $w$ is inside $C$, then color it blue unless one of the following holds:

\begin{itemize}
    \item $w\in O'\cup I'$.
    \item $\exists w_{2}\in N(Y) $ where $\{w,w_{2}\}$ dominates $I$.
    \item $\exists w_{2}\in O'$ where $\{w,w_{2}\}$ dominates $I\cap N(a_{1})$ or $I\cap N(a_{2})$.
\end{itemize}
\end{redrule}

\noindent
{\bf Soundness.}
Suppose $w\notin O'\cup I'$ and $w\in S$. We will show that if $w$ is a potential solution vertex then one of the above cases must hold. $w$ is a potential solution vertex so by reduction rule 6 it must have a neighbor in $Y$. Let $y\in N(w)\cap Y$ with $y\in N(a_{1})$ (namely) and $y'$ be the other vertex of $Y$. Let $W$ be the vertices in $S\cap C$ except for vertices in $O-Y$. We have $|W|\leq 3$ since any four vertices in $W$ can be replaced by $a_{1},a_{2}, y$ and $y'$. If the vertices of $W$ are all strictly inside $C$ with $W\cap O'=\emptyset$ and $|W| \geq 3$, then we must have either $|N(y)\cap W|\leq 1$ or $|N(y')\cap W|\leq 1$. Suppose namely it is the first case then the vertices of $W$ can be replaced by $a_{1}, a_{2}, $ and $y'$. Now we may assume that $W$ contains vertices of $O\cup O'\cup \{a_{1}, a_{2}\}$. If $W\cap O\neq \emptyset$, $w$ and at most 1 other vertex in $W$ must dominate $I$ ($I\cap N(O)=\emptyset$) thus $w$ satisfies one of the cases of the reduction rule. Now if $W\cap O= \emptyset$ then if we take $w', w''$ in $W$ different from $w$, it can be easily verified that any combination of choices of $w',w''$ either leads to replacing a vertex of $W$ or to showing that $w$ follows one of the above cases. An example would be if $w',w'' \in O'$ then if $Y\subset N(a_{1})$ suppose namely $w'$ is inside the region $a_{1}yw''y'$ then replace $w'$ by $a_{1}$, which takes us to another case. So we must have $y\in N(a_{1})$ and $y'\in N(a_{2})$ in this case since the vertices of $O'$ separate $C$ into two regions, if $w$ is in the region of $a_{1}$ replace it by $a_{1}$, otherwise replace $w$ and the vertex of $O'$ in the region of $a_{2}$ by $y$ and $a_{2}$. Other cases can be treated similarly.

\begin{redrule}
Suppose $I\neq \emptyset$ and $O'=\emptyset$. Color any vertex $u$ in $C$ blue except if one of the following is true:
\begin{itemize}
    \item $u\in I'$
     \item $\exists u'\in N(y)$ where $y\in N(u)\cap Y$ and $\{u,u'\}$ dominates $I$.
     \item $Y\subset N(a_{1})$ and $\exists u'\in N(Y)$ where $\{u,u'\}$ dominates $I-N(a_{1})$
\end{itemize}
\end{redrule}

\noindent
{\bf Soundness.}
Suppose $O’=\emptyset$, let u be a vertex that is not blue, $u\notin I’$. Then by reduction rule 6 rule $N(u)\cap Y =\{y\}$ for some $y\in Y$. There must be some $u’\in I\cup B\cup \{a1,a2\}$ which is in $S$. Let $W$ be as in the above reduction rule and we have that no four vertices of $W$ can be in $S$. If $Y \subset N(a_{1})$ then if $a_{1}\in W$, $u$ and $u'$ dominate $I-N(a1)$ and $u'\notin P$ so $u'\in N(Y)$ by reduction rule 6, so $u$ follows one of the above cases. Else if $|W|=3$ replace the three vertices of $W$ by $a_{1}$, $a_{2}$ and a vertex of $Y$ having 2 or more neighbors in $W$ so $|W|= 2$ and we are also done since $u'\in N(y)$ otherwise replace $u$ and $u'$ by $a_{1}$ and $a_{2}$. Now if $Y\not\subset N(a_{1})$ suppose $|W|=3$ then as before we can always replace the vertices of $W$ including u, so $|W|=2$ and $u'\in N(y)$ otherwise $u$ and $u'$ can be replaced.\\

Reduction rules $9-13$ concern blue vertices. They are applicable on general graphs and should be applied successively and exhaustively.

\begin{redrule}
Let $v\in V(G)$ where $d(v)\leq 1$. If $\exists w$ where $N_{h}[v] \subset N(w)$ and there exists at most one vertex $z \in N(v)-w$ with $d(z) \geq 2$, then color $v$ blue.
\end{redrule}

\noindent
{\bf Soundness.}
Suppose $v\in S$. If $w \notin S$, replace $v$ by $w$, otherwise replace $v$ by $z$.

\begin{redrule}
Delete all the edges between blue vertices, and delete 0-vertices that are blue.
\end{redrule}

\begin{redrule}
Let $v$ be a blue vertex, if $N(v)=\{u,w\}$ for some $u,w \in V(G)$ and the edge $uw$ exists, delete it and decrease each of $d(u)$ and $d(w)$ by one.
\end{redrule}

\noindent
{\bf Soundness.}
$v$ is a blue vertex so it is a high demand vertex due to reduction rule 10, thus one of its neighbors must be in $S$. Suppose (namely) $u$ is in $S$, then in $G-S$ the edge $uw$ is deleted and $w$ must have $d(w)-1$ other neighbors in $S$, so we can replace $d(w)$ by $d(w)-1$, and $u$ would not need any other neighbor in $S$ so decreasing $d(u)$ has no effect on the solution.

\begin{redrule}
Let $v$ be a blue vertex. If for some 1-vertex $u \neq v$ $N(v) \subset N[u]$, then replace $d(u)$ by 0.
\end{redrule}

\noindent
{\bf Soundness.}
$v$ is blue which means it is a high demand vertex by reduction rule 10. Thus $N(v) \cap S \neq \emptyset$ so $N(u) \cap S \neq \emptyset$ regardless of $d(u)$, so it will make no difference to decrease $d(u)$ to zero.

\begin{redrule}
Let $u$ be a 0-vertex where $|N(u)|=2$. If there exists a 0-vertex $v$ where $N(u)=N(v)$, then delete $u$.
\end{redrule}

\noindent
{\bf Soundness.}
Suppose $u\in S$, then if $v \notin S$ replace $u$ by $v$, otherwise replace $u$ and $v$ by $N(u)$. Thus $u\notin S$ and since $d(u)=0$ we can delete it.

\begin{lemma}
After applying the reduction rules above to a graph any candidate region $C$  contains at most $15$ vertices.
\end{lemma}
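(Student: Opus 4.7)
The plan is to bound $|V(C)|$ by separately bounding the boundary and the interior of $C$, and within the interior to separate surviving non-blue vertices from surviving blue vertices. The boundary $O = O_{1} \cup O_{2}$ consists of $a_{1}$, $a_{2}$ together with the internal vertices of two outer paths, each of type $1$, $2$, or $3$. A type-$2$ path contributes three internal vertices, the maximum among the three types, so $|O| \leq 2 + 3 + 3 = 8$.

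Next I would bound the non-blue interior vertices by leaning on reduction rules $6$--$8$. If $I = \emptyset$ the interior reduces to $B$, and a direct count using the bounded size of the boundary together with planarity (no $K_{3,3}$ subgraph) suffices. If $I \neq \emptyset$, every non-blue interior vertex either lies in $I' \cup O'$ or is a ``pair witness'' from the exception clauses of rule $7$ or $8$. Each $v \in I'$ is adjacent to every high-demand vertex of $I$, so once both $|I|$ and $|I'|$ are at least $3$ we produce a forbidden $K_{3,3}$ subgraph; hence $|I'| \leq 2$ in this regime. The analogous argument between $O'$ and $Y \subset O$ yields $|O'| \leq 2$ once $|Y| \geq 3$, while the degenerate cases $|I|, |Y| \leq 2$ are handled by direct inspection. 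A similar planarity analysis bounds the pair-witness vertices by a small constant, since each such witness must be adjacent to some fixed $y \in Y$ or lie in $N(Y)$.

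Finally I would bound the surviving blue vertices using rules $9$--$13$. Rule $10$ deletes blue $0$-vertices and every edge between two blue vertices, so each surviving blue vertex is of high demand and has no blue neighbor. Rules $9$, $11$, $12$, $13$ further eliminate blue vertices whose closed neighborhood is absorbed by a single vertex $w$, whose two neighbors form a triangle, or which duplicate the neighborhood of another $0$-vertex of degree two. After exhaustive application, distinct surviving blue vertices in $C$ must have pairwise distinct non-blue neighborhoods inside the bounded-size set (boundary plus non-blue interior), and planarity again forces their count to be small. Summing the three bounds ($8$ for the boundary, a small constant for non-blue interior, a small constant for blue interior) delivers $|V(C)| \leq 15$.

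The main obstacle I expect is the case analysis forced by rules $7$ and $8$: the surviving non-blue categories ($I'$, $O'$, and the various pair-witnesses with partners in $N(Y)$ or $O'$) overlap in subtle ways, and bounding their union requires joint planarity arguments rather than independently bounding each class. A secondary subtlety is verifying that the reductions have in fact been applied exhaustively, so that no hidden duplicate blue vertex survives to inflate the count beyond $15$.
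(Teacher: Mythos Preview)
The paper does not actually give a detailed proof of this lemma. Its entire argument is the sentence ``follows by the planarity of the graph and the above reduction rules'' together with Figure~2 exhibiting a worst-case configuration. So your proposal is already more detailed than what the paper provides, and in that sense there is nothing to ``match.''

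That said, your outline has a structural mismatch with the paper that you should be aware of. You read the bound $15$ as a bound on $|V(C)|$ in total, and accordingly budget $8$ for the boundary and try to squeeze the interior into $7$. But the paper's own worst-case figure has eight boundary vertices ($a_1,a_2,y,y'$ and the four internal vertices of the two type-$2$ outer paths) together with about fifteen vertices strictly inside. Moreover, in the proof of Theorem~1 the contribution $15\cdot 3k$ from regions is added \emph{separately} to the term $(x+3y+2z)$ counting internal vertices of outer paths. So the intended meaning of ``$15$'' appears to be a bound on the interior of a region (with boundary vertices accounted for elsewhere), not on the whole of $V(C)$. Under that reading your decomposition still makes sense, but the target is interior $\le 15$, not interior $\le 7$; the latter is not achievable and your sketch would not close.

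There is also a real gap in your planarity step for $|I'|$. You argue that each $v\in I'$ is adjacent to every high-demand vertex of $I$, and hence $|I'|\ge 3$ together with three high-demand vertices in $I$ yields a $K_{3,3}$. But nothing guarantees that $I$ contains three high-demand vertices: a $0$-vertex in $I$ is dominated for free, so when the high-demand part of $I$ has size at most two the set $I'$ can be large and your $K_{3,3}$ argument does not fire. The same caveat applies to your bound on $O'$ via $Y$, since $|Y|$ can be $0$, $1$, or $2$ depending on the types of $O_1,O_2$. These degenerate cases are exactly where the exception clauses of Rules~7 and~8 (and the subsequent blue-vertex cleanup in Rules~10--13) do the heavy lifting; you flag this case analysis as the main obstacle, and it is, but your current sketch does not yet indicate how you would close it.
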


The above lemma follows by the planarity of the graph and the above reduction rules. Below is the worst case example for the region $C$ when $O' \neq \emptyset$. In this case $w$ dominates $I$ with a vertex of $O'$ and the same applies to $u$. (see Figure \ref{fig2})

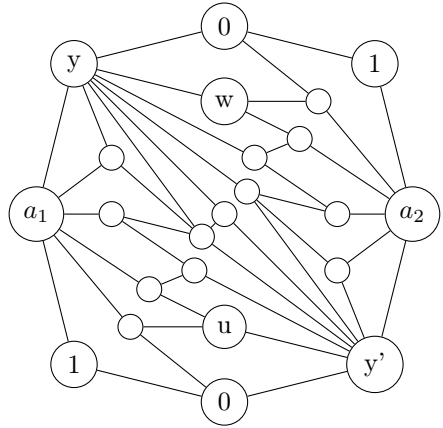
\begin{figure}[bth]
\begin{center}
	\begin{tikzpicture}[transform shape,scale=1]
			\tikzset{every node/.style={circle,minimum size=0.05cm}} 
			\node[draw] (A) at (-2.5,0){$a_{1}$};
            \node[draw] (B) at (-2,2){y };
            \node[draw] (C) at (0,2.5){0};
            \node[draw] (D) at (2,2){1};
            \node[draw] (E) at (2.5,0){$a_{2}$};
            \node[draw] (F) at (2,-2){y'};
            \node[draw] (G) at (0,-2.5){0};
            \node[draw] (H) at (-2,-2){1};
            \node[draw] (v'') at (0,0) {};
            \node[draw] (v) at (0.3,0.3) {};
            \node[draw] (v') at (-0.3,-0.3) {};
            \node[draw] (b) at (1.5,0) {};
            \node[draw] (c) at (1.5,-0.75) {};
            \node[draw] (w) at (0,1.5) {w};
            \node[draw] (d) at (0.4,0.75) {};
            \node[draw] (e) at (1,1) {};
            \node[draw] (f) at (1.25,1.5) {};
            \node[draw] (b') at (-1.5,0) {};
            \node[draw] (c') at (-1.5,0.75) {};
            \node[draw] (w') at (0,-1.5) {u};
            \node[draw] (d') at (-0.4,-0.75) {};
            \node[draw] (e') at (-1,-1) {};
            \node[draw] (f') at (-1.25,-1.5) {};
            \path (A) edge[-] (B);
            \path (B) edge[-] (C);
            \path (C) edge[-] (D);
            \path (D) edge[-] (E);
            \path (E) edge[-] (F);
            \path (F) edge[-] (G);
            \path (G) edge[-] (H);
            \path (H) edge[-] (A);
			\path (B) edge[-] (v);
            \path (F) edge[-] (v);
            \path (B) edge[-] (v');
            \path (F) edge[-] (v');
            \path (b) edge[-] (v);
            \path (b) edge[-] (E);
            \path (c) edge[-] (v);
            \path (c) edge[-] (E);
            \path (c) edge[-] (F);
            \path (B) edge[-] (v'');
            \path (F) edge[-] (v'');
            \path (v'') edge[-] (v');
            \path (B) edge[-] (d);
            \path (B) edge[-] (w);
            \path (d) edge[-] (b);
            \path (e) edge[-] (w);
            \path (e) edge[-] (d);
            \path (e) edge[-] (E);
            \path (f) edge[-] (w);
            \path (f) edge[-] (C);
            \path (f) edge[-] (E);
            \path (b') edge[-] (v');
            \path (b') edge[-] (A);
            \path (c') edge[-] (v');
            \path (c') edge[-] (A);
            \path (c') edge[-] (B);
            \path (F) edge[-] (d');
            \path (F) edge[-] (w');
            \path (d') edge[-] (b');
            \path (e') edge[-] (w');
            \path (e') edge[-] (d');
            \path (e') edge[-] (A);
            \path (f') edge[-] (w');
            \path (f') edge[-] (G);
            \path (f') edge[-] (A);
        \end{tikzpicture}
\end{center}
\caption{Worst case when $O' \neq \emptyset$}.
\label{fig2}
\end{figure}


\begin{theorem}
Let $G$ be a planar graph, then computing a vector dominating set for $G$ of order at most $k$ can be reduced to computing a vector dominating set of order at most $k$ to a planar graph $G'$ where $|G'| \leq 101k$.
\end{theorem}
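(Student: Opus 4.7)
The plan is to apply all the reduction rules above exhaustively in polynomial time, producing an equivalent instance $(G', d', k')$ with $k' \leq k$, and then to show that whenever $(G', d', k')$ is a yes-instance the reduced graph satisfies $|V(G')| \leq 101k$; otherwise the answer is already decided. Equivalence of the two instances is established rule-by-rule in the soundness paragraphs following each rule, so only the size bound remains.

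Assume therefore that $G'$ admits a vector dominating set $S$ with $|S| \leq k$, and fix such an $S$. The preceding proposition supplies a maximal $S$-region decomposition $\mathscr{R}$ whose associated multigraph $G_{\mathscr{R}}$ is thin, so the thin-planar edge bound yields $|\mathscr{R}| = |E(G_{\mathscr{R}})| \leq 3|S| - 6 \leq 3k$. Combining Lemma~\ref{edge-num} with the subsequent region-containment lemma shows that every vertex outside $S \cup A \cup L$ lies in $V(\mathscr{R})$, so
\[
|V(G')| \;\leq\; |V(\mathscr{R})| + |A| + |L|.
\]
The last lemma above bounds every candidate region by $15$ vertices, whence $|V(\mathscr{R})| \leq 15 \cdot 3k = 45k$; any double counting along shared boundaries only tightens the inequality, and every $s \in S$ is absorbed as a region endpoint once $|S|\geq 2$ (the small cases being trivial).

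The remaining task is to bound $|A|+|L|$ linearly in $k$ with total slack $56k$. For the leech set: each $\ell \in L$ is a $1$-vertex attached simultaneously to an endpoint $s_1$ and the adjacent interior vertex $v$ of a type-$3$ real path; since the number of real paths can be charged to the $O(k)$ regions of $\mathscr{R}$ via maximality, and since planarity bounds the number of leeches attached to any single $(s_1,v)$ pair by a constant, we obtain $|L| = O(k)$. For $A$: each $u \in A$ is a $0$-vertex whose every neighbor has at least two neighbors in $S$, so the bipartite incidence between $A$ and its neighborhood is planar and its right part is already controlled by the region analysis; reduction rule~13 kills duplicate degree-two zero-vertices and, combined with planarity, forces $|A| = O(k)$. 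Substituting the resulting constants into the displayed inequality gives $|V(G')| \leq 45k + c_A k + c_L k \leq 101k$.

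The main obstacle is the tight constant bookkeeping in the two $O(k)$ bounds above. The region count and per-region vertex bound fall out almost directly from the thin-planar framework and the $15$-vertex lemma, but the leech bound and the $A$-bound each demand a separate planarity/charging argument in the style of Alber et al., with reduction rules~9--13 invoked to exclude twin and replaceable-vertex configurations that would otherwise drive $|A|$ or $|L|$ super-linearly in $k$. Pushing those constants through carefully is what forces the final multiplier to be $101$ rather than something smaller.
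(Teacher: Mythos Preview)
Your high-level architecture matches the paper: reduce, assume a solution $S$, take a maximal thin $S$-region decomposition with $|\mathscr{R}|\le 3k$, cover everything outside $S\cup A\cup L$ by regions, and use the $15$-per-region lemma. Where your proposal falls short is exactly where you yourself flag the ``main obstacle'': you never actually prove the bounds on $|A|$ and $|L|$, you only assert they are $O(k)$ and then write $45k+c_Ak+c_Lk\le 101k$ without producing $c_A$ or $c_L$. That is the heart of the theorem, not routine bookkeeping, and the paper's argument for it is more specific than what you sketch.

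Concretely, the paper does \emph{not} charge leeches to real paths or bound ``leeches per $(s_1,v)$ pair''. Instead it introduces the set $K=\{v:d(v)=1\text{ and }v\text{ lies on an outer path}\}\cup S$, bounds $|K|\le 13k$ (at most $6k$ outer paths, each with at most two $1$-vertices), and then proves that every vertex of $L'\cup A_1$ (where $A_1=\{c\in A\setminus V(\mathscr{R}):|N(c)|\ge 3\}$) has at least three neighbours in $K$; the bipartite planar edge bound then gives $|A_1|+|L'|\le 26k$. The remaining low-degree part $A_2=A'\setminus A_1$ is handled separately: one shows $N(A_2)$ sits on type~1 or type~3 outer paths, so $|N(A_2)|\le x+z$, and then Rule~13 (no twin degree-two $0$-vertices) lets one view $A_2$ as edges of a planar graph on $N(A_2)$, giving $|A_2|\le 3(x+z)$. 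Summing $45k + (x+3y+2z)+(3x+3z)+26k$ and maximising over $x+y+z\le 6k$ is what produces $101k$. Your sketch has neither the degree-split of $A$, nor the set $K$, nor the ``$\ge 3$ neighbours in $K$'' claim, and without those ingredients there is no way to land on the constant $101$.
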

\begin{proof}
We will show that after applying the above reduction rules we obtain $|G'| \leq 101k$. Suppose $G$ admits a solution $S$ with a maximal $S$-region decomposition $\mathscr{R}$. Let $x$, $y$ and $z$ be the numbers of type 1, 2 and 3 outer paths of regions in $\mathscr{R}$ respectively. Let $A'=A-V(\mathscr{R})$, $L'=L-V(\mathscr{R})$, $A_{1}=\{c\in A': |N(c)| \geq 3\}$ and $A_{2} = A'\setminus A_{1}$. Let $K=\{v: d(v)=1$ and $v$ belongs to some outer path of $\mathscr{R}\}\cup S$. 
The number of outer paths of $\mathscr{R}$ is at most $6k$ (since we have at most $3k$ regions), with two 1-vertices at most on each outer path, thus $|K| \leq 12k+k=13k$.

\noindent
{\bf Claim1:} Each vertex of $A_{1}\cup L$ has three or more neighbors in $K$. To see this, take $c\in A_{1}$ then $N(c)\cap L =\emptyset$ (a leech cannot be adjacent to a 0-vertex) so $\forall v\in N(c)$, $v \in \mathscr{R}$ thus $v$ is on some outer path by planarity and since $|N(c)| \geq 3$ Claim1 follows.\\
Now take $l\in L'$. $l$ is a leech for some type 3 path, so $\exists s,v \in N(l)$ with $s\in S$ and $d(v)= 1$. $v$ must be on some outer path since $v \in V(\mathscr{R})$ ($v \notin A \cup L)$ and $v$ cannot be in the interior of a region due to planarity. Now, by reduction rule 11, $l$ must have a neighbor $v'$ different from $s$ and $v$. We must have $d(v')=1$ since $l$ is a leech and $v' \notin L$ since vertices of $L$ are colored blue by reduction rule 9, so no two leeches can be adjacent. Thus similar to $v$, $v'$ is on some outer path. This proves our claim.

\noindent
By considering the planar bipartite graph formed of the vertices of $A_{1}\cup L$ and $K$ and the edges between them, the number of edges $e$ is bounded above by $2(|A_{1}|+|L|+|K|)-4$, thus we have $3(|A_{1}|+|L|)\leq e \leq 2(|A_{1}|+|L|+13k)-4$ which gives $|A_{1}|+|L| \leq 26k$. 

\noindent
{\bf Claim2.} Let $c\in A_{2}$, then any vertex $v\in N(c)$ is on an type 1 or 3 outer path of a region $R\in \mathscr{R}$. This is true since $v$ is on a type 1 path $p$ by definition of $A$. If $p$ is an outer path, then Claim2 follows, otherwise $p$ must cross some region $R \in \mathscr{R}$ and $v$ must be on the boundary of $R$ by planarity. But then this cannot occur if the outer path containing $v$ is of type 2, which proves Claim2.

\noindent
Consequently (due to Claim2), the number of vertices in $N(A_{2})$ is at most $x+z$. Now no two vertices of $A_{2}$ can have the same neighborhood by reduction rule 13, so by viewing each vertex of $A_{2}$ as an edge between vertices of $N(A_{2})$, we can say that $|A_{2}| \leq 3(x+z)-6$.\\Thus we have $|V(G')| \leq 15\cdot3k+(x+3y+2z)+(3x+3z)+26k=71k+4x+3y+5z$ but this expression is maximized if $z=6k$, which gives $|V(G')| \leq 101k$.
\end{proof}

The above bound applies to several problems that can be seen as special cases. In particular, the same kernel bound is obtained for {\sc Bounded Degree Vertex Deletion}.

\begin{corollary}
The {\sc Planar Bounded Degree Vertex Deletion} problem admits a kernel of  order at most $101k$ irrespective of the target degree bound.
\end{corollary}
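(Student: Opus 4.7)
The plan is to exhibit a parameter-preserving reduction from \textsc{Planar Bounded Degree Vertex Deletion} (PBDVD) to \textsc{Planar Vector Dominating Set} (PVDS) that leaves the underlying graph (and hence its planarity) untouched, and then invoke the preceding theorem. Recall that an instance of PBDVD asks, given a planar graph $G=(V,E)$, a non-negative integer $k$, and a target degree bound $r$, whether there is a set $S\subseteq V$ of size at most $k$ such that every vertex of $G-S$ has degree at most $r$.

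The first step is to define the demand vector $d$ on $V$ by
\[
d(v) \;=\; \max\{0,\ \deg_G(v)-r\}
\quad\text{for every } v\in V.
\]
This is an $|V|$-dimensional non-negative integer vector, so $(G,d,k)$ is a legitimate PVDS instance, and it can be built in linear time from $(G,k,r)$. The second step is to verify equivalence of the two instances. If $S$ is a solution to PBDVD, then for every $v\in V\setminus S$ we have $\deg_{G-S}(v)\le r$, so $|N(v)\cap S|\ge \deg_G(v)-r\ge d(v)$, showing that $S$ is a vector dominating set. Conversely, if $S$ is a vector dominating set of size at most $k$, then every $v\in V\setminus S$ has $|N(v)\cap S|\ge d(v)\ge \deg_G(v)-r$, whence $\deg_{G-S}(v)\le r$, so $S$ solves the PBDVD instance.

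The third step is to push the constructed PVDS instance through the kernelization procedure guaranteed by the preceding theorem, obtaining an equivalent instance $(G',d',k')$ with $k'\le k$ and $|G'|\le 101k$. Because the reduction rules operate on $G$ and on the demand vector only (they never consult the target bound $r$), the output instance can again be interpreted as a PBDVD instance on $G'$ by reading off a new target degree bound from $d'$ and $G'$ in the obvious way; alternatively, one may simply keep the data in PVDS form and note that solving it settles the original PBDVD question by the equivalence above. Either way, the kernel size is at most $101k$ vertices.

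The main point to emphasize, and the only conceptually non-trivial one, is that although the demand values $d(v)$ depend on the degree bound $r$, the \emph{size bound} $101k$ obtained from the theorem is a function of $k$ alone. Thus the target degree $r$ disappears from the kernel bound, which is the claim of the corollary; previous kernelizations for BDVD produced bounds depending on both $k$ and $r$. No extra work is needed to preserve planarity, since $G'$ is obtained from $G$ by deletions of vertices and edges only.
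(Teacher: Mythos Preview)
Your argument is correct and mirrors the paper's (which merely notes that BDVD is a special case of Vector Domination and invokes the theorem). One caveat: the first option in your third step---``reading off a new target degree bound from $d'$ and $G'$ in the obvious way''---does not actually work, because several of the reduction rules (e.g.\ Rule~4, which deletes edges without adjusting demands) destroy the invariant $d(v)=\max\{0,\deg(v)-r\}$, so the kernelized instance need not correspond to any single target bound $r'$. Your fallback, keeping the output in PVDS form and using the equivalence, is the right move and is exactly what the paper (implicitly) does; strictly speaking this yields a bikernel rather than a kernel, but that distinction is not drawn in the paper either.
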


The same applies for the $r$-Dominating Set problem, which asks, for a given planar graph $G=(V,E)$ and integers $r,k$, whether there is a set $D$ of at most $k$ vertices such that every element of $V\setminus D$ has at least $r$ neighbors in $D$. Obviously, this is a special case of {\sc Vector Domination} where the demand vector $d$ satisfies $d(v) = r, \forall v\in V$.

\begin{corollary}
The {\sc Planar $r$-Dominating Set } problem admits a kernel of  order at most $101k$.
\end{corollary}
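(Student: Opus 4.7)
The plan is to reduce \textsc{Planar $r$-Dominating Set} to \textsc{Planar Vector Dominating Set} in the most direct way and then invoke the main theorem. Given an instance $(G,k,r)$ of \textsc{Planar $r$-Dominating Set}, I would define the constant demand vector $d$ by setting $d(v)=r$ for every $v\in V(G)$. A set $S\subseteq V(G)$ with $|S|\le k$ then forms an $r$-dominating set of $G$ if and only if it is a vector dominating set for $(G,d)$, so the two instances are equivalent and the transformation is obviously polynomial (indeed, linear) in the input size.

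Next, I would apply the main theorem (the $101k$ bound for PVDS) to $(G,d,k)$. In polynomial time this produces an equivalent planar instance $(G',d',k')$ with $k'\le k$ and $|V(G')|\le 101k$. Since the bound $101k$ depends only on the parameter $k$ and is independent of the target degree $r$, the resulting planar graph $G'$ is the desired kernel, and the size statement of the corollary follows at once.

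The only point worth flagging is that the reduced instance is, in general, a Vector Dominating Set instance rather than an $r$-Dominating Set instance in the strict sense, because reduction rules such as Reduction~Rule~\ref{3} may decrement the demands of individual vertices so that the demand vector is no longer constant. This, however, does not weaken the corollary: the kernel is a planar graph of order at most $101k$ that is equivalent to the original instance with respect to the decision question, and it can be solved within the broader Vector Domination framework. No new combinatorial ideas beyond those already established in the proof of the main theorem are required, so there is no substantive obstacle here; the work is entirely in the specialization of the demand vector.
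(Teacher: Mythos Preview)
Your proposal is correct and matches the paper's approach exactly: the paper simply observes that \textsc{Planar $r$-Dominating Set} is the special case of \textsc{Planar Vector Domination} with constant demand $d(v)=r$ and invokes the main theorem, with no further argument given. Your caveat that the reduced instance is formally a PVDS instance rather than a pure $r$-Dominating Set instance is a valid technical point that the paper does not address.
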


Another well-known special case of {\sc Vector Domination} is the {\sc $\alpha$-Dominating Set} problem. Given a graph $G=(V,E)$ along with an integer $k$ and $\alpha\in (0,1$], the $\alpha$-domination problem asks for a set $D\subset V$ of cardinality at most $k$ such that every vertex $w \in V\setminus D$ has a at least $\alpha\times degree(w)$ neighbors in $D$.

\begin{corollary}
The {\sc $\alpha$-Dominating Set } problem admits a kernel of order at most $101k$.
\end{corollary}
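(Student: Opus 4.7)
The plan is to realize the planar $\alpha$-Dominating Set problem as a direct special case of Planar Vector Dominating Set and then invoke Theorem~1. Given an instance $(G,k,\alpha)$ with $G$ planar, I would define a demand vector $d$ on $V(G)$ by setting
\[
d(v) := \lceil \alpha \cdot \deg_G(v) \rceil
\]
for every $v \in V(G)$. Since $\alpha \in (0,1]$ and $0 \leq \deg_G(v) \leq |V(G)|-1$, each $d(v)$ is a non-negative integer in $\{0,\ldots,|V(G)|-1\}$, so $(G,d,k)$ is a well-formed PVDS instance that can be produced in linear time.

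Next I would verify that the two instances have identical solution sets. For any candidate $D \subseteq V(G)$ and any $v \notin D$, the quantity $|N(v)\cap D|$ is a non-negative integer, so the real-valued inequality $|N(v)\cap D| \geq \alpha \cdot \deg_G(v)$ is equivalent to the integer inequality $|N(v)\cap D| \geq \lceil \alpha \cdot \deg_G(v) \rceil = d(v)$. Consequently, $D$ is an $\alpha$-dominating set of $G$ of size at most $k$ if and only if $D$ is a PVDS solution for $(G,d,k)$.

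Applying Theorem~1 to $(G,d,k)$ then produces in polynomial time an equivalent planar instance $(G',d',k)$ with $|V(G')| \leq 101k$, which by the equivalence above is also an equivalent instance of the original $\alpha$-Dominating Set problem; this is the claimed kernel. The only point deserving any attention is the passage from the real-valued threshold $\alpha \cdot \deg_G(v)$ to the integral demand $d(v)$, which is handled cleanly by the ceiling because membership counts are integers. Beyond this observation there is no real obstacle: the corollary follows immediately from Theorem~1 once the reduction is in place.
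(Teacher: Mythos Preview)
Your proof is correct and mirrors exactly what the paper intends: the corollary is stated there without proof, as an immediate consequence of Theorem~1 once one observes that $\alpha$-domination is the special case of vector domination obtained by setting $d(v)=\lceil \alpha \cdot \deg(v)\rceil$. The only caveat---which the paper also leaves implicit---is that the reduced instance $(G',d',k)$ is formally a PVDS instance rather than an $\alpha$-Dominating Set instance, so strictly speaking what you obtain is a bikernel into PVDS; this is harmless for the intended claim but worth being precise about.
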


The same kernel bound is also obtained for the {\sc Positive Influence Dominating Set} problem (PIDS), which (typically) corresponds to the case where $\alpha = 0.5$.

\end{document}